\newtheorem{thm}{Theorem}[section]
\newtheorem{lem}[thm]{Lemma}
\newtheorem{rem}[thm]{Remark}
\newtheorem{exa}[thm]{Example}
\numberwithin{equation}{section} \numberwithin{thm}{section}
\newcommand{\norm}[1]{\left\Vert#1\right\Vert}
\newcommand{\be}{\begin{equation}}
\newcommand{\ee}{\end{equation}}
\newcommand{\bea}{\begin{eqnarray}}
\newcommand{\eea}{\end{eqnarray}}
\newcommand{\sign}{\text{sign}}
\begin{document}
\sloppy
\title{On the Zeros of $q$-Hankel Transform by Using P\'{o}lya-Hurwitz Partial Fraction Method}



\author{M.~H.~Annaby         \and
        S.~R.~Elsayed-Abdullah
}

\institute{M.~H.~Annaby \at
              Department of Mathematics, Faculty of Science, Cairo University, 12613 Giza, Egypt \\
              \email{mhannaby@sci.cu.edu.eg}           
           \and
           S.~R.~Elsayed-Abdullah \at
           Department of Mathematics, Faculty of Science, Fayoum University, Fayoum, Egypt \\
           \email{sre12@fayoum.edu.eg}
}

\date{Received: date / Accepted: date}
\maketitle

\begin{abstract}
The technique of P\'{o}lya-Hurwitz of partial fractions is implemented to investigate the zeros of finite $q$-Hankel transforms, which are defined in terms of the third $q$-Bessel function of Jackson. The new approach, which is a $q$-counterpart of P\'{o}lya-Hurwitz technique
relaxes the restrictive conditions imposed on
$q$ in the previously obtained results. In the present study, we use the
$q$-type sampling theorems of the $q$-Hankel transforms, which lead directly to $q$-partial fractions. Various experimental examples are established.

\end{abstract}
\keywords{Zeros of entire functions, P\'{o}lya-Hurwitz partial fraction expansions, $q$-sampling theory\\
	2020 MSC. 30D10, 30C15, 30D45, 39A13.}



\section{Introduction}


Throughout this paper $0<q<1$ is a fixed number. Consider the $q$-Hankel (basic Hankel) transform
\begin{equation}\label{eq:1.1}
H_q^\nu(f;z)=\int_{0}^{1}f(t)\left(tz\right)^{1/2}J_\nu(tz;q^2)\,d_qt, 
\end{equation}
where $J_v(\cdot;q^2)$ is the third Jackson $q$-Bessel function of order
$v>-1$. In this paper we aim
to investigate the distribution of zeros of (\ref{eq:1.1}) by extending the results of \cite{AnSh1} for the basic cosine
and sine transforms to the $q$-Hankel transform (\ref{eq:1.1}). We give a $q$-analog of P\'{o}lya-Hurwitz
results without adding further restrictions on $f$ rather than the standard ones, and it turns out that this can be done by a
$q$-analog of Hurwitz partial fraction technique. For this task, we will use the
$q$-sampling theorem obtained by Annaby et al.\cite{AnHM2012}. In addition, we will also use another $q$-sampling theorem \cite{AnHM2012} to investigate the distribution of zeros of the function $F_q^\nu(f;z)$, defined below for $\nu \in \left(0,1\right)-\{1/2\}$.

In this section we present the Hahn-Exton $q$-Bessel function and some of its properties.
Here $q$ $\in$ $(0,1)$ is fixed, the Hahn-Exton $q$-Bessel function $J_\nu(z;q)$ of order $\nu>-1$ is defined
to be
\begin{equation}\label{eq:1.2}
J_\nu(z;q) := z^\nu \frac{\left(q^{\nu+1};q\right)_\infty}{\left(q;q\right)_\infty} \; {}_1\phi_1\left(\begin{array}{c} 0\\ q^{\nu+1}\end{array}\\|
q;qz^2 \right),
\quad z \in \mathbb{C},
\end{equation}
where the $q$-hypergeometric series ${}_r\phi_s$ is defined via
\begin{equation}\label{eq:1.3}
{}_r\phi_s \left( \begin{matrix} a_1,\dots,a_r \\ b_1,\dots,b_s \end{matrix}| q; z \right)
= \sum_{k=0}^\infty \frac{(a_1;q)_k(a_2;q)_k \dots (a_r;q)_k}{(b_1;q)_k(b_2;q)_k \dots (b_s;q)_k} 
\left((-1)^k q^{\frac{1}{2}k(k-1)} \right)^{1+s-r} z^k,
\end{equation}
whenever the series converges.
The $q$-shifted factorial, cf \cite{GRR}, is defined for $a \in \Bbb C$ to be
\begin{equation}\label{eq:1.4}
(a;q)_k=
\begin{cases}
1, & k=0,\\
\prod\limits_{i=0}^{k-1}(1-aq^i),  & k=1,2,\cdots,
\end{cases}
\end{equation}
and $(a;q)_\infty:=\lim\limits_{k\longrightarrow \infty}(a;q)_k$ is well defined. The multiple $q$-shifted factorial for complex numbers $a_1,a_2,\cdots,a_n$ is defined to be
\begin{equation}\label{eq:1.5}
(a_1,a_2,\cdots,a_n;q)_k=\prod\limits_{j=1}^{n}(a_j;q)_k.
\end{equation}
In \cite{GRR}, Jackson defined the $q$-integral to be
\begin{equation}\label{eq:1.6}
\int_{a}^{b}f(t)d_qt=\int_{0}^{b}f(t)d_qt-\int_{0}^{a}f(t)d_qt,
\end{equation}
where
\begin{equation}\label{eq:1.7}
\int_{0}^{x}f(t)d_qt:=x(1-q)\sum\limits_{k=0}^{\infty}q^kf(xq^k),
\end{equation}
provided that the series converges. The $q$-difference operator, which generalizes the definition of the classical derivative is defined by Jackson to be
\begin{equation}\label{eq:1.8}
(D_qf)(t)=\frac{f(t)-f(qt)}{(1-q)t}, \ t \neq 0,
\end{equation}
and if $t=0$, $(D_qf)(0)$ is defined to be, see e.g. \cite{AnM12},
\begin{equation}\label{eq:1.9}
(D_qf)(0)=\lim\limits_{n\longrightarrow \infty}\frac{f(tq^n)-f(0)}{tq^n}.
\end{equation}
Under standard conditions $(D_qf)(t)\longrightarrow f'(t)$ as $q \longrightarrow 1$, cf \cite{GRR}, and $q$-Jackson's integral (\ref{eq:1.7}) is a right inverse of the $q$-difference operator (\ref{eq:1.8}).

According to \cite{HR}, the Hahn-Exton $q$-Bessel function $J_\nu(\cdot; q^2)$ has a countably infinite number of positive simple zeros denoted by $j_{k\nu}$, $k =0,1,\cdots,$
where $0 < j_{1\nu}<j_{2\nu}<\cdots$. The asymptotic of $j_{k\nu}$'s is studied in \cite{BuCard2}, under the condition that $q^{2\nu+2}<\left(1-q^2\right)^2$. 

It was proved in \cite{HR} that 
for the non-zero real zeros $j_{k\nu}$, $k =1,2,\cdots,$ of $J_\nu(\cdot;q^2)$, we have
\begin{equation}\label{eq:1.10}
\int\limits_{0}^{1}t\left(J_\nu\left(qj_{k\nu}t;q^2\right)\right)^2\,d_qt=-\frac{1}{2}\left(1-q\right)q^{\nu-1}J_{\nu+1}\left(qj_{k\nu};q^2\right)J_\nu'\left(j_{k\nu};q^2\right).
\end{equation}

In \cite{AnM06} the authors derived results on reality and distribution of the $q$-Hankel transforms
\begin{eqnarray}
\label{eq:1.11}U_{\nu,g}(z)&:=&\int_{0}^{1}\left(tz\right)^{-\nu}g(t)J_\nu(tz;q^2)\,d_qt,\quad z \in \mathbb{C},\\
\label{eq:1.12}V_{\nu,g}(z)&:=&\int_{0}^{1}\left(tz\right)^{1-\nu}g(t)J_\nu(tz;q^2)\,d_qt,\quad z \in \mathbb{C}. 
\end{eqnarray}
Under the condition
\[q^{-1}(1-q)\frac{c_{\nu,g}}{C_{\nu,g}}>1,\]
the authors \cite{AnM06} proved that for $\nu>-1$ and $g(\cdot) \in L_q^1(0,1)$, the zeros of $U_{\nu,g}(z)$ are real, simple
and infinite. Moreover, $U_{\nu,g}(z)$ is an even function with no zeros in $\left[0, q^{-1}/\sqrt{C_{\nu,g}}\right)$, and its
positive zeros lie in the intervals
\begin{equation}\label{eq:1.13}
\left(\frac{q^{-m+1/2}}{\sqrt{C_{\nu,g}}}, \frac{q^{-m-1/2}}{\sqrt{C_{\nu,g}}}\right), \quad m=1,2,\cdots,
\end{equation}
one zero in each interval. In the same manner they proved that under the condition
\[q^{-1}(1-q)\frac{b_{\nu,g}}{B_{\nu,g}}>1,\]
the zeros of $V_{\nu,g}(z)$ are real, simple and infinite. Moreover, the odd function $V_{\nu,g}(z)$ has only one zero $z=0$ in $\left[0, q^{-1}/\sqrt{B_{\nu,g}}\right)$, and its positive zeros lie in the intervals
\begin{equation}\label{eq:1.14}
\left(\frac{q^{-m+1/2}}{\sqrt{B_{\nu,g}}}, \frac{q^{-m-1/2}}{\sqrt{B_{\nu,g}}}\right), \quad m=1,2,\cdots.
\end{equation}
The numbers $C_{\nu,g},\ c_{\nu,g},\ B_{\nu,g}$ and $b_{\nu,g}$ are defined in \cite[p. 1093]{AnM06}. For brevity we don't state these numbers, but we will compute them in some examples in the last section of the paper. These numbers depend on $\nu, q, g(\cdot)$ and their presence in (\ref{eq:1.13}) and (\ref{eq:1.14}) add restrictive conditions on $q, g(\cdot)$, which we aim to improve.

Let $L_q^2(0,1)$ be the set of all $q$-square integrable functions $f(\cdot)$, i.e.,
\begin{equation}\label{eq:1.15}
\norm{f}_2^2=\int_{0}^{1}\left|f(t)\right|^2\,d_qt<\infty.
\end{equation}
The space $L_q^2(0,1)$ is a Banach space with respect to the norm $\norm{\cdot}_2^2$. Moreover, it is a Hilbert space with the inner product 
\begin{equation}\label{eq:1.16}
\left<f,g\right>=\int_{0}^{1}f(t)\overline{g(t)}\,d_qt,\ f,g \in L_q^2(0,1).
\end{equation}

Now, we introduce the following sampling expansions \cite{AnHM2012} for the transforms $H_q^\nu(f;z)$ and $F_q^\nu(f;z)$, which play a major role in the main results of the paper.

\begin{thm}\label{thm:1.1}
For $f(\cdot) \in L_q^2(0,1)$, the function
\begin{equation}\label{eq:1.17}
H_q^\nu(f;z) = \int_0^1 f(t)\left(tz\right)^{1/2}J_\nu(tz; q^2) \;d_qt
\end{equation}
admits the sampling expansion
\begin{equation}\label{eq:1.18}
H_q^\nu(f;z) = \sum_{k=1}^{\infty} \frac{H_q^\nu(f;qj_{k\nu})}{\alpha_k}\left(z^{-1}q j_{k\nu}\right)^{\frac{1}{2}}J_\nu(q^{-1}z; q^2)
\left( \frac{1}{z - qj_{k\nu}} + \frac{1}{z + qj_{k\nu}} \right),
\end{equation}
where 
\begin{equation}\label{eq:1.19}
\alpha_k = q^{-1} J_\nu'(j_{k\nu}; q^2)
\end{equation}
and $j_{k\nu}$ are the positive zeros of $J_\nu(z; q^2)$. The convergence of \textup{(\ref{eq:1.18})} is uniform on any compact subset of $\left(0,\infty\right)$.
\end{thm}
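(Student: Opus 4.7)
The plan is to prove Theorem 1.1 as a Kramer/WSK type sampling theorem built from the orthogonality of the dilated $q$-Bessel system. Concretely, I would first verify that the family
$$\phi_k(t) := (qj_{k\nu})^{1/2}\, t^{1/2} J_\nu(qj_{k\nu} t; q^2), \qquad k=1,2,\dots,$$
is a complete orthogonal system in $L_q^2(0,1)$. Orthogonality follows from the standard $q$-Bessel orthogonality over $[0,1]_q$ at the roots $j_{k\nu}$ of $J_\nu(\cdot;q^2)$, and the squared norms $\|\phi_k\|_2^2$ are given by (\ref{eq:1.10}); completeness is the known $q$-Fourier--Bessel result. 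Then every $f\in L_q^2(0,1)$ admits the expansion
$$f(t) = \sum_{k=1}^\infty \frac{\langle f,\phi_k\rangle}{\|\phi_k\|_2^2}\, \phi_k(t),$$
convergent in $L_q^2(0,1)$.

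Next I would identify the Fourier coefficients with samples of the transform. By the definition (\ref{eq:1.17}),
$$\langle f,\phi_k\rangle = \int_0^1 f(t)\,(qj_{k\nu})^{1/2} t^{1/2} J_\nu(qj_{k\nu}t;q^2)\,d_qt = H_q^\nu(f;qj_{k\nu}),$$
so that the coefficients are precisely the values at the sampling nodes $qj_{k\nu}$ up to the normalization constants coming from (\ref{eq:1.10}), which will be packaged into the $\alpha_k$ in (\ref{eq:1.19}).

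I would then substitute this expansion into (\ref{eq:1.17}) and interchange the sum and the $q$-integral (justified since $\{\phi_k\}$ is an orthonormal-like basis and the kernel $(tz)^{1/2} J_\nu(tz;q^2)$ is, for each fixed $z$, in $L_q^2(0,1)$; a Cauchy--Schwarz estimate together with Bessel's inequality gives absolute and locally uniform convergence in $z$). This reduces the proof to computing the sampling kernel
$$S_k(z) = \frac{1}{\|\phi_k\|_2^2}\int_0^1 \phi_k(t)(tz)^{1/2} J_\nu(tz;q^2)\,d_qt.$$
The crux is therefore a $q$-Lommel-type identity: a Christoffel--Darboux evaluation of $\int_0^1 t\, J_\nu(at;q^2)J_\nu(bt;q^2)\,d_qt$ as a bilinear form in $J_\nu(\cdot;q^2)$ and $J_\nu(q^{-1}\cdot;q^2)$ at $a,b$, divided by $a^2-b^2$. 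Specializing $b=qj_{k\nu}$ and using $J_\nu(j_{k\nu};q^2)=0$ together with (\ref{eq:1.10}) and (\ref{eq:1.19}) collapses the expression exactly to the displayed rational factor
$$(z^{-1}qj_{k\nu})^{1/2}\,J_\nu(q^{-1}z;q^2)\left(\frac{1}{z-qj_{k\nu}}+\frac{1}{z+qj_{k\nu}}\right).$$

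The main obstacle is the last step: obtaining the correct $q$-Lommel / Christoffel--Darboux formula with the right pole structure (note the appearance of $J_\nu(q^{-1}z;q^2)$ rather than $J_\nu(z;q^2)$, typical for Hahn--Exton $q$-Bessel functions), and checking that the limit as the second argument approaches a zero of $J_\nu(\cdot;q^2)$ yields the normalization in (\ref{eq:1.19}). Once this identity is in place, uniform convergence on compacta of $(0,\infty)$ follows from Bessel's inequality, the local boundedness of $J_\nu(q^{-1}z;q^2)$ and $z^{-1/2}$ away from $0$, and the fact that the poles at $\pm qj_{k\nu}$ cancel with the factor $J_\nu(q^{-1}z;q^2)$ exactly at those points where the sample weight vanishes.
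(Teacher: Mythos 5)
The paper does not actually prove Theorem \ref{thm:1.1}; it is imported verbatim from \cite{AnHM2012}, so there is no internal proof to compare against. That said, your Kramer-type derivation is exactly the standard (and, as far as the cited literature goes, the original) route to this expansion: expand $f$ in the complete orthogonal system $\{(qj_{k\nu}t)^{1/2}J_\nu(qj_{k\nu}t;q^2)\}_{k\ge 1}$ (the same system the paper invokes in the proof of Theorem \ref{thm:3.2}), identify $\langle f,\phi_k\rangle$ with $H_q^\nu(f;qj_{k\nu})$, apply Parseval against the kernel $(tz)^{1/2}J_\nu(tz;q^2)\in L_q^2(0,1)$, and evaluate the resulting bilinear integral by a $q$-Lommel identity; the normalization (\ref{eq:1.10}) then produces $\alpha_k=q^{-1}J_\nu'(j_{k\nu};q^2)$, and your Cauchy--Schwarz/Bessel tail estimate correctly yields uniform convergence on compacta of $(0,\infty)$. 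The only piece you leave unproved --- the evaluation of $\int_0^1 t\,J_\nu(zt;q^2)J_\nu(qj_{k\nu}t;q^2)\,d_qt$ as a multiple of $J_\nu(q^{-1}z;q^2)/\bigl(z^2-(qj_{k\nu})^2\bigr)$ --- is a genuine known identity for the Hahn--Exton $q$-Bessel function, obtained from the $q$-difference equation it satisfies via a $q$-analogue of Green's formula (the $q$-Wronskian boundary terms at $t=1$ collapse because $J_\nu(j_{k\nu};q^2)=0$, which is precisely why $J_\nu(q^{-1}z;q^2)$ rather than $J_\nu(z;q^2)$ survives); you correctly flag this as the crux and anticipate its shape, so there is no conceptual gap, only a computation you would need to carry out or cite.
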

The transform $F_q^\nu\left(f;\cdot\right)$ is defined in equation (\ref{eq:1.21}) below. Let $Y_\nu\left(z;q\right)$ be the function defined for $\nu \notin \Bbb Z$ by
\begin{equation}\label{eq:1.20}
Y_\nu\left(z;q\right)=\frac{\Gamma_q(\nu)\Gamma_q(1-\nu)}{\pi}\left(q^{\nu/2}\cos \pi \nu J_\nu(z;q)-J_{-\nu}(zq^{-\nu/2};q)\right),
\end{equation}
and for $n \in \Bbb Z$, $Y_n\left(z;q\right)=\lim\limits_{\nu \longrightarrow n}Y_\nu\left(z;q\right)$. The following sampling representation is derived in \cite{AnHM2012}. Here we give the exact representation, which corrects the calculations of \cite{AnHM2012}.
\begin{thm}\label{thm:1.2}
For $f(\cdot) \in L_q^2(0,1)$ and $\nu \in \left(0,1\right)-\{1/2\}$, the $q$-Hankel type transform
\begin{equation}\label{eq:1.21}
F_q^\nu\left(f;z\right)= -\frac{q^{-\nu\left(\nu-1\right)-{1/2}}}{1+q}\int_0^1 \overline{f(t)}t^{1/2}\left(J_\nu(tz; q^2)Y_\nu(z; q^2)-Y_\nu(tz; q^2)J_\nu(z; q^2)\right) \;d_qt
\end{equation}
admits the sampling representation
\begin{equation}\label{eq:1.22}
zF_q^\nu\left(f;z\right) = \sum_{k=1}^{\infty}\frac{z_kF_q^\nu\left(f;z_k\right)}{\beta_k}\left(q^{\nu^2}z^{-\nu}J_\nu(z; q^2)-cz^{\nu}J_{-\nu}(q^{-\nu}z; q^2)\right)\frac{2z}{\left(z^2 - z_k^2\right)},
\end{equation}
where 
\begin{equation}\label{eq:1.23}
\beta_k = \frac{d}{dz}\left(q^{\nu^2}z^{-\nu}J_\nu(z; q^2)-cz^{\nu}J_{-\nu}(q^{-\nu}z; q^2)\right)_{z=z_k}.
\end{equation}
For $k=1,2,\cdots,$ $z_k$'s are the positive zeros of $\left(q^{\nu^2}z^{-\nu}J_\nu(z; q^2)-cz^{\nu}J_{-\nu}(q^{-\nu}z; q^2)\right)$ and $c$ is any constant. Moreover the series \textup{(\ref{eq:1.22})} converges absolutely on $\Bbb C$ and uniformly on compact subsets of $\Bbb C$.
\end{thm}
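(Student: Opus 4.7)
The plan is to derive (\ref{eq:1.22}) by recognizing its right-hand side as the orthogonal expansion of $F_q^\nu(f;\cdot)$ in a family of reproducing kernels for $L_q^2(0,1)$, paralleling the $q$-Kramer sampling theorem established in \cite{AnHM2012} that also gives Theorem \ref{thm:1.1}. To begin, set
\begin{equation*}
K(t,z) := -\frac{q^{-\nu(\nu-1)-1/2}}{1+q}\, t^{1/2}\bigl(J_\nu(tz;q^2)Y_\nu(z;q^2)-Y_\nu(tz;q^2)J_\nu(z;q^2)\bigr),
\end{equation*}
so that $F_q^\nu(f;z)=\int_0^1 \overline{f(t)}\,K(t,z)\,d_qt$. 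Using the series (\ref{eq:1.3}) and the definition (\ref{eq:1.20}) of $Y_\nu$, one verifies that the apparent singularity of $Y_\nu(z;q^2)$ at $z=0$ cancels after multiplication by $z$, so that $zF_q^\nu(f;\cdot)$ is entire on $\mathbb{C}$, and that $K(\cdot,z)\in L_q^2(0,1)$ for every $z$.

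Next, identify
\begin{equation*}
\Phi(z) := q^{\nu^2}z^{-\nu}J_\nu(z;q^2)-cz^\nu J_{-\nu}(q^{-\nu}z;q^2)
\end{equation*}
as the characteristic function of an underlying $q$-Sturm--Liouville Bessel problem on $(0,1)$ whose eigenvalues are $\pm z_k$, $k\ge 1$. The main technical step is a $q$-Lommel-type cross-product identity of the form
\begin{equation*}
(z^2-w^2)\int_0^1 K(t,z)K(t,w)\,d_qt = A(w)\Phi(z)-A(z)\Phi(w),
\end{equation*}
for a suitable entire auxiliary function $A$, derived by $q$-integration by parts applied to the $q$-Bessel equations satisfied by $t\mapsto J_{\pm\nu}(tz;q^2)$, in the same spirit as the computation leading to (\ref{eq:1.10}). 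Specializing $w=z_k$ collapses the right-hand side to $A(z_k)\Phi(z)/(z_k^2-z^2)$, while the limit $w\to z_k$ yields $\|K(\cdot,z_k)\|_2^2 = -A(z_k)\beta_k/(2z_k)$ with $\beta_k=\Phi'(z_k)$.

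Completeness of $\{K(\cdot,z_k)\}_{k\ge 1}$ in $L_q^2(0,1)$, which follows from the $q$-spectral theory underlying \cite{AnHM2012}, then yields the Parseval expansion of $\overline{f(\cdot)}$ in this basis; substituting it into the integral (\ref{eq:1.21}), exchanging summation and $q$-integration through Cauchy--Schwarz, and invoking the cross-product identity, gives exactly the series (\ref{eq:1.22}) after simplification. Absolute and uniform convergence on compact subsets of $\mathbb{C}$ follows from standard bounds on $\Phi$ combined with the $\ell^2$-summability of the values $\{F_q^\nu(f;z_k)/\|K(\cdot,z_k)\|_2\}$. The principal obstacle will be the careful bookkeeping by which the constants from (\ref{eq:1.20}), the prefactor $-q^{-\nu(\nu-1)-1/2}/(1+q)$ in (\ref{eq:1.21}), and the free parameter $c$ in $\Phi$ combine to produce precisely the normalization $\beta_k$ in (\ref{eq:1.23}) and the cardinal factor $2z/(z^2-z_k^2)$; this is exactly the point at which the calculation in \cite{AnHM2012} needed correction.
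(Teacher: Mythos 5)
A preliminary remark: the paper does not actually prove Theorem \ref{thm:1.2}. It is presented as a result ``derived in \cite{AnHM2012}'', restated here only with corrected constants; no argument appears in the text. So there is no in-paper proof to compare yours against, and the only question is whether your outline would itself establish the statement.

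Your strategy --- view $K(t,z)$ as a Kramer kernel attached to a $q$-Bessel boundary value problem, prove a Lommel/Green cross-product identity evaluating $\int_0^1 K(t,z)K(t,w)\,d_qt$ in terms of the characteristic function $\Phi$, expand $f$ in the complete orthogonal system $\{K(\cdot,z_k)\}$, and finish with Cauchy--Schwarz --- is the standard route and is surely the one followed in \cite{AnHM2012}. But as written it is a plan rather than a proof, and the gap sits exactly where you place ``the principal obstacle''. Everything in the theorem beyond abstract Kramer sampling is the explicit form of that identity: the identification of the auxiliary function $A$, the evaluation of $\|K(\cdot,z_k)\|_2^2$ in terms of $\beta_k$, and the verification that the prefactor $-q^{-\nu(\nu-1)-1/2}/(1+q)$, the constants hidden in $Y_\nu$ via (\ref{eq:1.20}), and the free parameter $c$ combine to give precisely $\beta_k$ of (\ref{eq:1.23}) and the cardinal factor $2z/(z^2-z_k^2)$ of (\ref{eq:1.22}). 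You postulate the identity only ``of the form'' $A(w)\Phi(z)-A(z)\Phi(w)$ and defer the bookkeeping; since the theorem is being restated in this paper precisely because that bookkeeping came out wrong in \cite{AnHM2012}, this is the one step that cannot be waved through. Two further points need more than an appeal to ``the underlying $q$-spectral theory'': (i) orthogonality and completeness of $\{K(\cdot,z_k)\}_{k\ge1}$ in $L_q^2(0,1)$ is a genuine theorem (the paper cites \cite{AnMS2012} for it in the proof of Theorem \ref{thm:3.3}), and for $\nu\in(0,1)$ the endpoint $t=0$ is in the limit-circle regime, so the role of $c$ as a boundary parameter selecting the spectrum $\{z_k\}$ must be made explicit rather than treated as an afterthought; (ii) the claims that $K(\cdot,z)\in L_q^2(0,1)$ and that $zF_q^\nu(f;z)$ extends to an entire function require checking the $t^{-\nu}$ and $z^{\pm 2\nu}$ contributions of $Y_\nu$, which is not automatic for noninteger $\nu$ and again depends on the exact constants in (\ref{eq:1.20}).
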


The following classical result of Hurwitz \cite{H1,H2}, see also \cite{Sch,Titch} will be needed in the sequel.
\begin{thm}\textup{(Hurwitz)}\label{thm:1.3}
Let $f_n(z)$ be a sequence of functions defined on a region $\Omega \subseteq \Bbb C$, $f_n$ converges uniformly to $f$ in $\Omega$, $f \neq 0$ on $\Omega$. Let $z_0 \in \Omega$ be an interior point of $\Omega$. Then:
\begin{enumerate}
	\item [\textup{(i)}] $f(z_0)=0$ if and only if $z_0$ is a limit point of the set of all zeros of $f_n(z)$, $n \in \Bbb N$.
	
	\item [\textup{(ii)}] If $z_0$ is a zero of order $m$ of $f(z)$, then there are suitable $\delta>0$, $N_0\in \Bbb N$, such that for all $n \in \Bbb N$, $n>N_0$, $f_n(z)$ has exactly $m$ zeros in $\{z\in \Bbb C: |z-z_0|<\delta\}$, counting multiplicity.
\end{enumerate}
\end{thm}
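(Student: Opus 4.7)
The plan is the classical Rouch\'e-theorem argument. I work under the standing interpretation (implicit in the paper's context where the sequences $f_n$ and their limits $f$ arise as holomorphic functions on $\Omega$, and where the stated hypothesis ``$f\neq 0$ on $\Omega$'' should be read as $f\not\equiv 0$). By Weierstrass's theorem on uniform limits of holomorphic functions, $f$ is itself holomorphic on $\Omega$, and hence its zeros are isolated by the identity theorem.

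To prove (ii), I first use that $z_0$ is an isolated zero of $f$ of order $m$ to choose $\delta>0$ small enough that $\overline{D(z_0,\delta)}\subset\Omega$ and $f$ has no zero on this closed disk other than $z_0$ itself. Both conditions are available: the first because $z_0$ is an interior point of $\Omega$, the second by isolation of zeros. Since the circle $\gamma_\delta=\{z:|z-z_0|=\delta\}$ is compact and $f$ is continuous and nonzero on it,
\[
\mu:=\min_{z\in\gamma_\delta}|f(z)|>0.
\]
By uniform convergence on $\gamma_\delta$, there exists $N_0$ such that for all $n>N_0$ one has $|f_n(z)-f(z)|<\mu\leq|f(z)|$ for every $z\in\gamma_\delta$. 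Rouch\'e's theorem then gives that $f_n$ and $f$ have the same number of zeros inside $D(z_0,\delta)$, counted with multiplicity; since $f$ contributes exactly $m$, namely the zero $z_0$ of order $m$, so does $f_n$.

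For (i), the forward direction is an immediate corollary of (ii): if $f(z_0)=0$ of order $m\geq 1$, then applying (ii) with the sequence of radii $1/k\to 0$ produces zeros of $f_n$ accumulating at $z_0$, so $z_0$ is a limit point of the set of zeros of $\{f_n\}$. Conversely, suppose $z_0$ is such a limit point; choose indices $n_k$ and points $w_{n_k}$ with $f_{n_k}(w_{n_k})=0$ and $w_{n_k}\to z_0$. Uniform convergence on any compact neighborhood of $z_0$ together with continuity of $f$ yields
\[
f(z_0)=\lim_{k\to\infty}f(w_{n_k})=\lim_{k\to\infty}\bigl(f(w_{n_k})-f_{n_k}(w_{n_k})\bigr)=0,
\]
since the last difference is uniformly small on a neighborhood of $z_0$ containing eventually all $w_{n_k}$.

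There is no substantive obstacle: the argument is entirely classical, and the only mildly delicate point is the simultaneous choice of $\delta$ that places $\overline{D(z_0,\delta)}$ inside $\Omega$ and isolates $z_0$ from other zeros of $f$ — both ensured by the interiority of $z_0$ and the identity theorem for the holomorphic nonzero function $f$. The result itself, as the authors indicate, is the well-known theorem of Hurwitz \cite{H1,H2}, proved in detail in \cite{Sch,Titch}.
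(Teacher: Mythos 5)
The paper states this result as a classical theorem of Hurwitz and gives no proof, only citations to \cite{H1,H2,Sch,Titch}; your Rouch\'e-theorem argument is exactly the standard proof found in those references and is correct, including your (necessary) reading of the hypotheses as ``$f_n$ holomorphic'' and ``$f\not\equiv 0$''. Nothing further is needed.
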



\section{Preliminaries}

The use of P\'{o}lya-Hurwitz partial fraction method is employed for the classical Hankel transforms by Cho et al. in \cite{CCP}. The present work can be also viewed as a $q$-counterpart of the results of \cite{CCP}. Herewith we outline Cho et al.'s work briefly. In \cite{CCP}
Cho et al. extended the work of P\'{o}lya \cite{GP} concerning Fourier
cosine and sine transforms to the Hankel transform
\begin{equation}\label{eq:2.1}
H_\nu(f)(z) =\int\limits_{0}^{1} f(t) J_\nu(tz) \sqrt{tz}dt,
\end{equation}
where $J_\nu(z)$ is the Bessel function of the first kind of order $\nu >-1$. In \cite{CCP} the authors introduced the Kernel $\Bbb{J}_\nu(z)$ and the normalized Hankel transform $\Bbb{H}_\nu(f)(z)=\int_{0}^{1}t^{\nu+1/2}f(t)\Bbb{J}_\nu(tz)\,dt$, where 
\begin{equation}\label{eq:2.2}
\Bbb{J}_\nu(z)=\frac{2^\nu \Gamma(\nu+1)}{z^\nu}J_\nu(z), \
\Bbb{H}_\nu(f)(z) =\frac{2^\nu \Gamma(\nu+1)}{z^{\nu+1/2}}H_\nu(f)(z),\ z\ne 0.
\end{equation}
Equation (\ref{eq:2.2}) implies that $J_\nu(z),\, \Bbb{J}_\nu(z)$ have the same zeros for $z\ne 0$ and so do the
Hankel transforms $H_\nu(z),\,\Bbb{H}_\nu(z)$. The authors studied the nature and distribution of
zeros of $\Bbb{H}_\nu(z)$ instead of $H_\nu(z)$ as it belongs to the Laguerre-P\'{o}lya class of entire
functions $\Bbb{LP}$.i.e., the class of real entire functions $G(z)$ that have the form
\begin{equation}
\nonumber G(z)= C z^le^{-az^2+bz}\prod\limits_{m=1}^{w}\left(1-\frac{z}{\delta_m}\right)e^{\frac{z}{\delta_m}},\ 0\leq w\leq \infty,
\end{equation}
where $a\geq 0$, $b, C \in \Bbb R$, $l$ is a non-negative integer, and $(\delta_m)_{m \in \Bbb N}$ is a sequence of non-zero real numbers satisfying $\sum_{m=1}^{w}1/\delta_m^2<\infty.$

For $\nu >-1$, $-1<\mu<\nu+2$. It is proved for $f(t)$
defined on $0<t<1$ and under the integrability condition
\begin{equation}\label{eq:2.3}
\begin{cases}
\displaystyle\int_0^1 |f(t)| dt < \infty , & \nu \ge -1/2,\\
\displaystyle\int_0^1 t^{\nu + 1/2} |f(t)| dt < \infty,  & -1 < \nu < -1/2
\end{cases}
\end{equation}
that the partial fraction expansion
\begin{equation}\label{eq:2.4}
\frac{\Bbb{H}_\nu(f)(z)}{z \Bbb{J}_\mu(z)} = \frac{\Bbb{H}_\nu(f)(0)}{z}
+ \sum_{m=1}^\infty \frac{\Bbb{H}_\nu(f)(j_{\mu,m})}{j_{\mu,m} \Bbb{J}_\mu'(j_{\mu,m})} 
\left( \frac{1}{z - j_{\mu,m}} + \frac{1}{z + j_{\mu,m}} \right), z \in \Bbb{D}_\mu,
\end{equation}
holds, where $\Bbb{D}_\mu:=\Bbb C\setminus \{0,\pm j_{\mu,1}, \pm j_{\mu,2},\cdots,\pm j_{\mu,m}\}$ denotes the positive zeros of $\Bbb{J}_\mu(z)$. The series (\ref{eq:2.4}) converges uniformly on every compact subset of $\Bbb{D}_\mu$. Expansion (\ref{eq:2.4}) can be also deduced from the sampling theory associated with the Hankel transform derived by many authors, see e.g.\cite{ZHB,Z}.

Applying Hurwitz-P\'{o}lya's technique on the partial fraction expansion (\ref{eq:2.4}), it was proved
for the positive integrable $f(t)$ that if $\sigma_m=(-1)^{m+1}\Bbb{H}_\nu(f)(j_{\mu,m})$, $m=1,2,\cdots,$ keeps the same sign for all $m$, then
$\Bbb{H}_\nu(f)(z)$ has only infinite, real and simple zeros. Moreover, $\Bbb{H}_\nu(f)(z)$ has only one zero in each of the intervals $\left(j_{\mu,m},\ j_{\mu,m+1}\right)$, $m=1,2,\cdots,$ for $\sigma_m>0$
and it has only one zero in each of the intervals
$\left(0, j_{\mu,1}\right)$, $\left(j_{\mu,m},\ j_{\mu,m+1}\right)$, $m=1,2,\cdots,$ for $\sigma_m<0$.

In the special case $\mu=\nu$, they employed a version of Sturm's comparison theorems to find the conditions on $f(t)$ to keep the same sign of $\sigma_m$.

For $\nu>-1$ and $f(t)$ is positive on $0<t<1$ satisfying (\ref{eq:2.3}) and the assumptions:
\begin{enumerate}
\item [\textup{(i)}] $f(t)$ is increasing for $0<t<1$ when $|\nu|\leq 1/2$, and it does not belong to the exceptional case, the class of all step functions on $[0,1]$ having finitely many jump discontinuities at rational points when $|\nu|=1/2$.	
\item [\textup{(ii)}] If $t^{3/2-3}|\nu|f(t)$ is increasing for $0<t<1$ when $|\nu|> 1/2$.
\end{enumerate}

$\Bbb{H}_\nu(f)(z)\in \Bbb{LP}$ with infinite, real and simple zeros. Moreover, $\Bbb{H}_\nu(f)(z)$ has only one positive zero in each interval
\[\left(j_{\nu,m},\ j_{\nu,m+1}\right),\ m=1,2,\cdots. \]

\section{Zeros of $H_q^\nu(f;z)$ and $F_q^\nu(f;z)$}

In this section we will derive $q$-analog of P\'{o}lya's results for the zeros of the Hahn-Exton $q$-Bessel function using P\'{o}lya-Hurwitz approach of partial fraction expansion. For this purpose, we implement the
$q$-sampling theorems stated above. We start our derivations
with the following preliminary lemma from \cite{AnSh1}.
\begin{lem}\label{lem:3.1}
Let $x_1<x_2<\cdots<x_n,\; n\ge2,$ be real numbers. Hence, the rational function 
\begin{equation}\label{eq:3.1}
r(x)=\sum_{k=1}^{n}\frac{\alpha_k}{x-x_k}
\end{equation}
has exactly $n-1$ real simple zeros, one in each interval $\left(x_m,x_{m+1}\right),\; m=1,2,\cdots, n-1,$ provided that
\begin{equation}\label{eq:3.2}
\textup{\sign}\,(\alpha_m\,\alpha_{m+1})=1,\quad m=1,2,\cdots, n-1.
\end{equation}
\end{lem}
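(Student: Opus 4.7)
The plan is to show that the sign hypothesis forces all residues $\alpha_k$ to share a common sign, then exploit the resulting monotonicity of $r(x)$ on each interval cut out by the poles, and finally pin down the total number of zeros with a degree count on the numerator of $r(x)$.

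First I would unpack the hypothesis \textup{(\ref{eq:3.2})}. Since $\sign(\alpha_m\,\alpha_{m+1})=1$ for every $m=1,\dots,n-1$, an immediate induction propagates a common sign across all the $\alpha_k$. Replacing $r(x)$ by $-r(x)$ if necessary (which preserves the zero set and pole structure), I may assume without loss of generality that $\alpha_k>0$ for every $k$.

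Next, I would compute
\begin{equation}
r'(x)=-\sum_{k=1}^{n}\frac{\alpha_k}{(x-x_k)^2},
\end{equation}
which is strictly negative on $\Real\setminus\{x_1,\dots,x_n\}$. Hence $r$ is strictly decreasing on each open interval $(x_m,x_{m+1})$. On such an interval the dominant term of $r$ near the left endpoint is $\alpha_m/(x-x_m)\to+\infty$ as $x\to x_m^+$, while near the right endpoint it is $\alpha_{m+1}/(x-x_{m+1})\to-\infty$ as $x\to x_{m+1}^-$. By continuity and the intermediate value theorem, $r$ has at least one zero in $(x_m,x_{m+1})$, and by strict monotonicity exactly one, which is automatically simple.

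This produces at least $n-1$ real simple zeros, one in each of the prescribed intervals. To close the count, I would clear denominators and write
\begin{equation}
r(x)=\frac{P(x)}{\prod_{k=1}^{n}(x-x_k)},\qquad P(x)=\sum_{k=1}^{n}\alpha_k\prod_{j\ne k}(x-x_j),
\end{equation}
where $\deg P\le n-1$. The zeros of $r$ away from the poles coincide with the zeros of $P$, so $r$ has at most $n-1$ zeros in total. Combining the two bounds yields exactly $n-1$ real simple zeros, distributed one per interval $(x_m,x_{m+1})$ as claimed. I do not foresee any genuine obstacle here; the only mildly subtle point is recognizing that the consecutive sign condition \textup{(\ref{eq:3.2})} propagates to a uniform sign across all residues, which is what simultaneously unlocks the monotonicity argument and the boundary limits at each pole.
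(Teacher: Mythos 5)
Your proof is correct and complete: the sign condition propagates a common sign to all residues, the strict monotonicity of $r$ between consecutive poles together with the limits $+\infty$ and $-\infty$ at the endpoints gives exactly one simple zero in each gap, and clearing denominators bounds the total number of zeros by $n-1$, which also rules out zeros outside the gaps and non-real zeros. Note that the paper itself states this lemma without proof, importing it from \cite{AnSh1}, so there is no in-paper argument to compare against; your monotonicity--plus--degree-count route is the standard one for such partial-fraction lemmas and fills that gap self-containedly.
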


The next two theorems are the $q$-analogs of P\'{o}lya's results, concerning the zeros of $H_q^\nu(f;z)$ and $F_q^\nu(f;z)$.
\begin{thm}\label{thm:3.2}
The zeros of the entire function $H_q^\nu(f;z)$ are infinite, real, and simple, provided that $f \in L^2_q(0,1)$ and its $q$-Fourier coefficients with respect to the system $\left\{\left(qj_{k\nu}t\right)^{\frac{1}{2}}J_\nu\left(qj_{k\nu}t;q^2\right) \right\}_{k=1}^\infty$ alternate. Moreover, the positive zeros of the function $H_q^\nu(f;z)$ are distributed in the intervals
\begin{equation}\label{eq:3.3}
\left(qj_{k\nu}, qj_{(k+1)\nu}\right),\ k=1,2,\cdots,
\end{equation}
one in each interval.
\end{thm}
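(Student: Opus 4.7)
The plan is to reduce the problem to the classical Hurwitz theorem (Theorem~\ref{thm:1.3}) applied to truncations of the sampling expansion of Theorem~\ref{thm:1.1}, in the spirit of the P\'{o}lya-Hurwitz partial-fraction technique captured by Lemma~\ref{lem:3.1}. First I would factor (\ref{eq:1.18}) as
\[
H_q^\nu(f;z) = z^{-1/2} J_\nu(q^{-1}z; q^2)\, \phi(z),\qquad \phi(z):=\sum_{k=1}^{\infty} A_k \left( \frac{1}{z - qj_{k\nu}} + \frac{1}{z + qj_{k\nu}}\right),
\]
with $A_k:=(qj_{k\nu})^{1/2} H_q^\nu(f; qj_{k\nu})/\alpha_k$, and work with the truncated rational function $\phi_n$ (first $n$ terms) together with $R_n(z):=z^{-1/2} J_\nu(q^{-1}z;q^2)\phi_n(z)$. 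By Theorem~\ref{thm:1.1}, $R_n \to H_q^\nu(f;\cdot)$ uniformly on compact subsets of $(0,\infty)$.

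Next I would translate the alternation hypothesis into the sign pattern demanded by Lemma~\ref{lem:3.1}. The $q$-Fourier coefficient of $f$ against $(qj_{k\nu}t)^{1/2}J_\nu(qj_{k\nu}t;q^2)$ equals $c_k:=H_q^\nu(f;qj_{k\nu})/\|(qj_{k\nu}t)^{1/2}J_\nu(qj_{k\nu}t;q^2)\|_2^2$; by (\ref{eq:1.10}) this squared norm is a positive, $k$-independent multiple of $-qj_{k\nu}J_{\nu+1}(qj_{k\nu};q^2)J_\nu'(j_{k\nu};q^2)$. Since $\alpha_k$ itself carries the factor $J_\nu'(j_{k\nu};q^2)$, the sign of $A_k$ equals that of $c_k\,J_{\nu+1}(qj_{k\nu};q^2)$ up to a fixed sign. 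The interlacing of the positive zeros of $J_\nu(\cdot;q^2)$ and $J_{\nu+1}(\cdot;q^2)$ forces $J_{\nu+1}(qj_{k\nu};q^2)$ to alternate in $k$, so an alternating sequence $c_k$ yields $\sign(A_k)$ constant in~$k$.

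With all $A_k$ of one sign, I list the $2n$ simple poles of $\phi_n$ from left to right as $-qj_{n\nu}<\cdots<-qj_{1\nu}<qj_{1\nu}<\cdots<qj_{n\nu}$ with residues $A_n,\ldots,A_1,A_1,\ldots,A_n$. Consecutive products are then positive, so Lemma~\ref{lem:3.1} produces $2n-1$ real simple zeros of $\phi_n$, one per inter-pole interval. Because $\phi_n$ is odd (combine each paired summand as $2z/(z^2-q^2j_{k\nu}^2)$), the central zero sits at $z=0$ and the remaining $2n-2$ pair off, leaving exactly one simple positive zero in each $(qj_{k\nu},qj_{(k+1)\nu})$, $k=1,\ldots,n-1$. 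On every such interval the factor $z^{-1/2}J_\nu(q^{-1}z;q^2)$ is nonzero, so these are also the positive zeros of $R_n$ there.

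Finally, fix $m\ge 1$ and set $I_m:=(qj_{m\nu},qj_{(m+1)\nu})$. For every $n>m$ the function $R_n$ has a unique simple zero in $I_m$; by compactness a subsequence of these converges in $\overline{I_m}$, and the alternation hypothesis (which implies $H_q^\nu(f;qj_{k\nu})\ne 0$ for all~$k$) rules out both endpoints, so Theorem~\ref{thm:1.3}(i) yields a zero of $H_q^\nu(f;\cdot)$ in $I_m$. A multiple zero or two distinct zeros in $I_m$ would, by Theorem~\ref{thm:1.3}(ii), force $R_n$ to have at least two zeros in $I_m$ for large~$n$, contradicting uniqueness; so the zero is simple and unique. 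The identical argument on $(0,qj_{1\nu})$, where $\phi_n$ is sign-definite and $R_n$ zero-free, shows $H_q^\nu(f;\cdot)$ has no zero there. This yields the infinite sequence of simple real zeros with the claimed localization (\ref{eq:3.3}). The hard step I expect is the sign bookkeeping of the second paragraph: extracting from (\ref{eq:1.10}) and the $q$-interlacing of the zeros of $J_\nu(\cdot;q^2)$ and $J_{\nu+1}(\cdot;q^2)$ the precise statement that the alternation of the $q$-Fourier coefficients is equivalent to constancy of $\sign(A_k)$.
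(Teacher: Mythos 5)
Your overall strategy coincides with the paper's: rewrite the sampling expansion (\ref{eq:1.18}) as the partial fraction (\ref{eq:3.8}), apply Lemma \ref{lem:3.1} to symmetric truncations, and pass to the limit with Hurwitz's theorem. Two points need repair. The first is your sign bookkeeping. The detour through (\ref{eq:1.10}) and ``interlacing of the zeros of $J_\nu(\cdot;q^2)$ and $J_{\nu+1}(\cdot;q^2)$'' is both unnecessary and under-justified: an interlacing property (nowhere established or cited in the paper for the Hahn--Exton functions) would control the sign of $J_{\nu+1}$ at the points $j_{k\nu}$, not at the scaled points $qj_{k\nu}$ that actually occur in (\ref{eq:1.10}). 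What actually gives the alternation you want is that the left-hand side of (\ref{eq:1.10}) is a positive quantity, so $-J_{\nu+1}(qj_{k\nu};q^2)\,J_\nu'(j_{k\nu};q^2)>0$ for every $k$, and $J_\nu'(j_{k\nu};q^2)$ alternates because $J_\nu(\cdot;q^2)>0$ on $(0,j_{1\nu})$ and the zeros are simple. But once you use that positivity you may as well argue as the paper does and skip $J_{\nu+1}$ entirely: the denominator in (\ref{eq:3.5}) is a positive squared norm, so $\sign(c_k)=\sign\bigl(H_q^\nu(f;qj_{k\nu})\bigr)$, and since both $H_q^\nu(f;qj_{k\nu})$ and $\alpha_k=q^{-1}J_\nu'(j_{k\nu};q^2)$ alternate, the residues $A_k$ have constant sign. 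Your conclusion is correct, but the justification should go through the positivity of (\ref{eq:1.10}) rather than an unproved interlacing statement.

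The second point is a genuine gap: you invoke only the uniform convergence of $R_n$ on compact subsets of $(0,\infty)$, so your Hurwitz argument places one simple zero in each interval $(qj_{k\nu},qj_{(k+1)\nu})$ and excludes zeros in $(0,qj_{1\nu})$ and at the sample points, but it never excludes non-real zeros --- whereas the theorem asserts that \emph{all} zeros of $H_q^\nu(f;\cdot)$ are real. The paper's proof relies on the stronger fact, stated at (\ref{eq:3.9})--(\ref{eq:3.10}), that the partial fractions converge uniformly on compact subsets of $\mathbb{C}$ that avoid the points $\pm qj_{k\nu}$. With that, since each truncation $R_n$ has only real zeros (Lemma \ref{lem:3.1} makes all $2n-1$ zeros of $\phi_n$ real, and the zeros of $J_\nu(q^{-1}z;q^2)$ are the real points $\pm qj_{k\nu}$), Theorem \ref{thm:1.3}(i) shows every zero of $H_q^\nu(f;\cdot)$ is a limit point of real zeros and hence real. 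You should add this complex-domain step; without it the ``real'' part of the statement is not proved.
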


\begin{proof}
Since $\left\{\left(qj_{k\nu}t\right)^{\frac{1}{2}}J_\nu\left(qj_{k\nu}t;q^2\right) \right\}_{k=1}^\infty$ is a complete orthogonal set in $L^2_q(0,1)$ cf.\cite{AbBu1}, then $f(\cdot)$ has the $q$-Fourier expansion
\begin{equation}\label{eq:3.4}
f(t)= a_1\left(qj_{1\nu}t\right)^{\frac{1}{2}}J_\nu\left(qj_{1\nu}t;q^2\right)+a_2\left(qj_{2\nu}t\right)^{\frac{1}{2}}J_\nu\left(qj_{2\nu}t;q^2\right)+\cdots,
\end{equation}
where
\begin{equation}\label{eq:3.5}
\displaystyle a_k=\frac{\displaystyle\int_{0}^{1}f(t)\left(qj_{k\nu}t\right)^{\frac{1}{2}}J_\nu\left(qj_{k\nu}t;q^2\right)\,d_qt}{\displaystyle\int_{0}^{1}\left(qj_{k\nu}t\right)\left(J_\nu\left(qj_{k\nu}t;q^2\right)\right)^2\,d_qt}=\frac{H_q^\nu\left(f;qj_{k\nu}\right)}{\|\left(qj_{k\nu}t\right)^{\frac{1}{2}}J_\nu\left(qj_{k\nu}t;q^2\right)\|^2},
\end{equation}
and $\|\left(qj_{k\nu}t\right)^{\frac{1}{2}}J_\nu\left(qj_{k\nu}t;q^2\right)\|\ne 0$, $k=1,2,\cdots$. 
From the conditions of the theorem,
\begin{equation}\label{eq:3.6}
\textup{\sign}\left(a_{m}a_{m+1}\right)=-1,\ m=1,2,\cdots,
\end{equation}
which implies that
\begin{equation}\label{eq:3.7}
\sign\left(H_q^\nu\left(f;qj_{k\nu}\right)H_q^\nu\left(f;qj_{(k+1)\nu}\right)\right)=-1, \;k=1,2,\cdots.
\end{equation}

It follows directly that $H_q^\nu(f;z)$ has at least one zero in each of the intervals (\ref{eq:3.3}). Next we prove that they are simple and there is no other zeros. From expansion (\ref{eq:1.18}) above, we obtain the
$q$-partial fraction expansion
\begin{equation}\label{eq:3.8}
\frac{z^{1/2}H_q^\nu(f;z)}{J_\nu(q^{-1}z; q^2)} = \sum_{k=1}^{\infty} \frac{\left(q j_{k\nu}\right)^{\frac{1}{2}}H_q^\nu(f;qj_{k\nu})}{\alpha_k}
\left( \frac{1}{z - qj_{k\nu}} + \frac{1}{z + qj_{k\nu}} \right),
\end{equation}
$z \ne qj_{k\nu}$, $k=1,2,\cdots$. Therefore
\begin{equation}\label{eq:3.9}
\frac{z^{1/2}H_q^\nu(f;z)}{J_\nu(q^{-1}z; q^2)} =\lim\limits_{m\longrightarrow \infty}\xi_m(z),
\end{equation}
where
\begin{equation}\label{eq:3.10}
\xi_m(z) = \sum_{k=-m, k\ne 0}^{m}\frac{A_k}{z - qj_{k\nu}},\quad A_k=A_{-k}=\frac{\left(q j_{k\nu}\right)^{\frac{1}{2}}H_q^\nu(f;qj_{k\nu})}{\alpha_k}
\end{equation}

and $j_{-k\nu}=-j_{k\nu}$, uniformly on compact subsets of $\Bbb C$ that do not contain any of the numbers $\pm qj_{k\nu}$, $k=1,2,\cdots$. The completion of the proof is hanged up on Hurwitz' theorem, Theorem \ref{thm:1.3} above and proving that
\begin{equation}\label{eq:3.11}
\sign\left\{ \left(\frac{H_q^\nu\left(f;qj_{k\nu}\right)}{\alpha_{k}}\right)\,\left(\frac{H_q^\nu\left(f;qj_{(k+1)\nu}\right)}{\alpha_{k+1}}\right)\right\}=1,\ k=1,2,\cdots.
\end{equation}
Indeed, from (\ref{eq:1.19}), we have $\alpha_k=q^{-1}J_\nu'\left(j_{k\nu};q^2\right),\ k=1,2,\cdots$.

Since the ${}_1\phi_1$-series in (\ref{eq:1.2}) yields $1$ for $z=0$. Then, by continuity,
\begin{equation}\label{eq:3.12}
J_\nu\left(z;q^2\right)>0,\ z \in \left(0,j_{1\nu}\right), \nu>-1.
\end{equation}

Therefore $\alpha_1=q^{-1}J_\nu'\left(j_{1\nu};q^2\right)<0$.
By mathematical induction, we conclude that $\sign\left(\alpha_k\right)$ alternates, and (\ref{eq:3.11}) holds true, then Lemma \ref{lem:3.1} guarantees that $\xi_m(z)$ has exactly $2m-1$ real and simple zeros distributed such that each interval $\left(-qj_{(k+1)\nu}, -qj_{k\nu}\right),\left(-qj_{1\nu}, qj_{1\nu}\right), \left(qj_{k\nu}, qj_{(k+1)\nu}\right),\ k=1,2,\cdots,m-1,$ contains exactly one zero. Hence, from Hurwitz' theorem $H_q^\nu(f;z)$ has one simple zero in each of the intervals $\left(-qj_{(k+1)\nu}, -qj_{k\nu}\right),\left(-qj_{1\nu}, qj_{1\nu}\right),\left(qj_{k\nu}, qj_{(k+1)\nu}\right),\ k\in \Bbb N$.
Relation (\ref{eq:3.7}) implies that $H_q^\nu(f;z)$ cannot have zeros at the end-points of $\left(qj_{k\nu}, qj_{(k+1)\nu}\right)$. Moreover, it has no other zeros from Hurwitz' theorem, and the proof is complete.
\qed
\end{proof}

\begin{thm}\label{thm:3.3}
For $\nu \in \left(0,1\right)-\{1/2\}$, suppose that $f(t) \in L^2_q(0,1)$, and that its $q$-Fourier coefficients alternate, i.e.
\begin{equation}\label{eq:3.13}
f(t)= \sum\limits_{k=1}^{\infty}b_k
J_\nu(z_k;q^2) t^{1/2}\left(c^{-1}z_k^{-2\nu}J_\nu(z_kt;q^2)-J_{-\nu}(z_kq^{-\nu}t;q^2)\right)
\end{equation}
and
\begin{equation}\label{eq:3.14}
\textup{\sign}\left(b_{m}b_{m+1}\right)=-1,\ m=1,2,\cdots.
\end{equation}
Then the zeros of the entire function $F_q^\nu(f;z)$ are infinite, real, and simple. Moreover, the positive zeros of the function $F_q^\nu(f;z)$ are distributed in the intervals
\begin{equation}\label{eq:3.15}
\left(z_{k}, z_{k+1}\right),\ k=1,2,\cdots,
\end{equation}
one in each interval.
\end{thm}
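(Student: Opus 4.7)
The plan is to transcribe, in the setting of the kernel from Theorem \ref{thm:1.2}, the four-step strategy just used for Theorem \ref{thm:3.2}: turn the sampling series into a $q$-partial fraction, read off the alternation of the sample values from the alternation of the Fourier-type coefficients, truncate to a rational function whose zeros are controlled by Lemma \ref{lem:3.1}, and let Hurwitz' theorem transfer the distribution back to $F_q^\nu(f;z)$. Concretely, setting $\Phi(z):=q^{\nu^2}z^{-\nu}J_\nu(z;q^2)-cz^{\nu}J_{-\nu}(q^{-\nu}z;q^2)$, I would divide both sides of (\ref{eq:1.22}) by $\Phi(z)$ to obtain
\begin{equation*}
\frac{zF_q^\nu(f;z)}{\Phi(z)}=\sum_{k=1}^{\infty}\frac{z_k F_q^\nu(f;z_k)}{\beta_k}\left(\frac{1}{z-z_k}+\frac{1}{z+z_k}\right),
\end{equation*}
valid on $\Bbb C\setminus\{\pm z_k:k\geq 1\}$ with uniform convergence on compact subsets that avoid these points.

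Next I would identify the coefficients $b_k$ of (\ref{eq:3.13}) with the sample values $F_q^\nu(f;z_k)$. The system $\{t^{1/2}(c^{-1}z_k^{-2\nu}J_\nu(z_k t;q^2)-J_{-\nu}(z_k q^{-\nu}t;q^2))\}_{k\geq 1}$ on which (\ref{eq:3.13}) is expanded is the same orthogonal system that underlies the construction of Theorem \ref{thm:1.2}, so its Fourier coefficients are reciprocals of squared norms times the inner products of $\overline{f(t)}$ against these basis elements. After absorbing the prefactor in (\ref{eq:1.21}) and the definition (\ref{eq:1.20}) of $Y_\nu$, this yields $b_k = \kappa_k\, F_q^\nu(f;z_k)$ with $\kappa_k>0$. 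Condition (\ref{eq:3.14}) then becomes
\begin{equation*}
\text{sign}\bigl(F_q^\nu(f;z_m)F_q^\nu(f;z_{m+1})\bigr)=-1,\quad m=1,2,\ldots,
\end{equation*}
so the intermediate value theorem already furnishes at least one zero of $F_q^\nu(f;z)$ in each interval $(z_k,z_{k+1})$.

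To secure simplicity and exclude additional zeros, I would introduce
\begin{equation*}
\xi_m(z)=\sum_{k=-m,\,k\neq 0}^{m}\frac{A_k}{z-z_k},\qquad A_k=A_{-k}=\frac{z_k F_q^\nu(f;z_k)}{\beta_k},
\end{equation*}
with the convention $z_{-k}:=-z_k$, and apply Lemma \ref{lem:3.1}. This requires $\text{sign}(A_k A_{k+1})=1$, so, combined with the alternation of $F_q^\nu(f;z_k)$, I need $\text{sign}(\beta_k)$ to alternate. Since the $z_k$ are consecutive simple zeros of the smooth entire function $\Phi$, its derivative $\Phi'$ must switch sign between them, so $\text{sign}(\beta_k\beta_{k+1})=-1$ for free. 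Lemma \ref{lem:3.1} then endows $\xi_m$ with exactly $2m-1$ real simple zeros, one each in $(z_k,z_{k+1})$, $(-z_{k+1},-z_k)$ and in $(-z_1,z_1)$, and Hurwitz' theorem applied to $\xi_m\longrightarrow zF_q^\nu(f;z)/\Phi(z)$ transports this distribution to $F_q^\nu(f;z)$; the alternating-sign relation on the samples rules out zeros coinciding with any $z_k$.

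The step I expect to be the main obstacle is the bookkeeping that produces $b_k=\kappa_k F_q^\nu(f;z_k)$ with a positive $\kappa_k$. The kernel in (\ref{eq:1.21}) is a $J_\nu$-$Y_\nu$ combination, whereas (\ref{eq:3.13}) is written through $J_\nu$ and $J_{-\nu}$, so one must carefully unfold (\ref{eq:1.20}), track the conjugation on $f$, and absorb the factor $-q^{-\nu(\nu-1)-1/2}/(1+q)$ to check that $\kappa_k$ is independent of $k$ in sign. Once this linkage is verified, everything else is a mechanical repetition of the proof of Theorem \ref{thm:3.2}.
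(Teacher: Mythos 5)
Your proposal is correct and follows essentially the same route as the paper's proof: the $q$-partial fraction expansion obtained from Theorem \ref{thm:1.2}, transfer of the alternation of the $b_k$ to the samples $F_q^\nu(f;z_k)$ via the orthogonality relation, Lemma \ref{lem:3.1} applied to the symmetric truncations, and Hurwitz' theorem. The only deviations are immaterial: the constant linking $b_k$ to $F_q^\nu(f;z_k)$ is in fact negative rather than positive (only its constancy in $k$ matters), and you get the sign alternation of $\beta_k=\Phi'(z_k)$ directly from the fact that the derivative of a real-analytic function alternates in sign at consecutive simple zeros, whereas the paper fixes the sign of $\Phi$ on $(0,z_1)$ by a case analysis on $c$ and then inducts --- your shortcut is, if anything, cleaner.
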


\begin{proof}
The system $\left\{J_\nu(z_k;q^2) t^{1/2}\left(c^{-1}z_k^{-2\nu}J_\nu(z_kt;q^2)-J_{-\nu}(z_kq^{-\nu}t;q^2)\right)
 \right\}_{k=1}^\infty$
is a complete orthonormal set in $L^2_q(0,1)$ cf.\cite{AnMS2012}. Therefore, series (\ref{eq:3.13}) converges in the $L^2_q(0,1)$-norm,
\begin{eqnarray}\label{eq:3.16}
\nonumber\displaystyle b_k&=&\frac{\displaystyle\int_{0}^{1}f(t)J_\nu(z_k;q^2) t^{1/2}\left(c^{-1}z_k^{-2\nu}J_\nu(z_kt;q^2)-J_{-\nu}(z_kq^{-\nu}t;q^2)\right)
\,d_qt}{\displaystyle\int_{0}^{1}\left(J_\nu(z_k;q^2) t^{1/2}\left(c^{-1}z_k^{-2\nu}J_\nu(z_kt;q^2)-J_{-\nu}(z_kq^{-\nu}t;q^2)\right)
\right)^2\,d_qt}\\
&=&\frac{-\left\{\frac{q^{-\nu\left(\nu-1\right)-{1/2}}}{1+q}\Gamma_{q^2}(\nu)\Gamma_{q^2}(1-\nu)\right\}^{-1}F_q^\nu\left(f;z_k\right)}{\left\|J_\nu(z_k;q^2) t^{1/2}\left(c^{-1}z_k^{-2\nu}J_\nu(z_kt;q^2)-J_{-\nu}(z_kq^{-\nu}t;q^2)
\right) \right\|^2}.
\end{eqnarray}
Since $\|J_\nu(z_k;q^2) t^{1/2}\left(c^{-1}z_k^{-2\nu}J_\nu(z_kt;q^2)-J_{-\nu}(z_kq^{-\nu}t;q^2)\right)
\|\ne 0$, $k=1,2,\cdots,$ then condition (\ref{eq:3.14}) leads us to
\begin{equation}\label{eq:3.17}
\sign\left(F_q^\nu\left(f;z_k\right)F_q^\nu\left(f;z_{k+1}\right)\right)=-1, \;k=1,2,\cdots.
\end{equation}

Therefore $F_q^\nu(f;z)$ has at least one zero in each of the intervals (\ref{eq:3.15}). Now, we prove that they are simple and there is no other zeros. The $q$-partial fraction expansion
\begin{equation}\label{eq:3.18}
\frac{zF_q^\nu(f;z)}{\left(q^{\nu^2}z^{-\nu}J_\nu(z; q^2)-cz^{\nu}J_{-\nu}(q^{-\nu}z; q^2)\right)} = \sum_{k=1}^{\infty} \frac{z_kF_q^\nu(f;z_k)}{\beta_k}
\left( \frac{1}{z - z_k} + \frac{1}{z + z_k} \right),
\end{equation}
$z \ne z_k$, $k=1,2,\cdots$, can be deduced from (\ref{eq:1.18}). Thus
\begin{equation}\label{eq:3.19}
 \frac{zF_q^\nu(f;z)}{\left(q^{\nu^2}z^{-\nu}J_\nu(z; q^2)-cz^{\nu}J_{-\nu}(q^{-\nu}z; q^2)\right)}=\lim\limits_{m\longrightarrow \infty}\phi_m(z),
\end{equation}
where $\phi_m(\cdot)$ is the sequence of partial sums
\begin{equation}\label{eq:3.20}
\phi_m(z) = \sum_{k=-m, k\ne 0}^{m}\frac{B_k}{z -z_k},\quad B_k=B_{-k}=\frac{z_kF_q^\nu(f;z_k)}{\beta_k}
\end{equation}
and $z_{-k}=-z_k$, uniformly on compact subsets of $\Bbb C$ that do not contain any of the numbers $\pm z_k$, $k=1,2,\cdots$. As in the previous proof, we prove that
\begin{equation}\label{eq:3.21}
\sign\left\{ \left(\frac{F_q^\nu\left(f;z_k\right)}{\beta_{k}}\right)\,\left(\frac{F_q^\nu\left(f;z_{k+1}\right)}{\beta_{k+1}}\right)\right\}=1,\ k=1,2,\cdots.
\end{equation}
Indeed, from (\ref{eq:1.18}), we have $\beta_k=\frac{d}{dz}\left(q^{\nu^2}z^{-\nu}J_\nu(z; q^2)-cz^{\nu}J_{-\nu}(q^{-\nu}z; q^2)\right)_{z=z_k},\ k=1,2,\cdots$.

Since ${}_1\phi_1\left(\begin{array}{c} 0\\ q^{2\nu+2}\end{array}\\|
q^2;0 \right)\textbf{}=1$, then by continuity,
\begin{equation}\label{eq:3.22}
\left(q^{\nu^2}z^{-\nu}J_\nu(z; q^2)-cz^{\nu}J_{-\nu}(q^{-\nu}z; q^2)\right)>0,\ z \in \left(0,z_1\right)
\end{equation}
for $c<\left(q^{2\nu+2};q^2\right)_\infty/\left(q^{-2\nu+2};q^2\right)_\infty$. Therefore $\beta_1<0$.
By mathematical induction, we conclude that $\sign\left(\beta_k\right)$ alternates, and (\ref{eq:3.21}) holds true and 
\begin{equation}\label{eq:3.23}
\left(q^{\nu^2}z^{-\nu}J_\nu(z; q^2)-cz^{\nu}J_{-\nu}(q^{-\nu}z; q^2)\right)<0,\ z \in \left(0,z_1\right)
\end{equation}
for $c>\left(q^{2\nu+2};q^2\right)_\infty/\left(q^{-2\nu+2};q^2\right)_\infty$. Therefore $\beta_1>0$.
By mathematical induction, we conclude that $\sign\left(\beta_k\right)$ 
alternates, and (\ref{eq:3.21}) holds true. Lemma \ref{lem:3.1} guarantees that $\phi_m(z)$ has exactly $2m-1$ real and simple zeros distributed such that each interval $\left(-z_{k+1)}, -z_{k}\right),\left(-z_1, z_1\right), \left(z_{k}, z_{k+1}\right),\ k=1,2,\cdots,m-1,$ contains exactly one zero. Consequently, from Hurwitz' theorem $F_q^\nu(f;z)$ has one simple zero in each of the intervals $\left(-z_{k+1)}, -z_{k}\right),\left(-z_1, z_1\right), \left(z_{k}, z_{k+1}\right),\ k\in \Bbb N$. Relation (\ref{eq:3.17}) implies that $Fq^\nu(f;z)$ cannot have zeros at the end-points of $\left(z_{k}, z_{k+1}\right)$. Moreover, it has no other zeros from Hurwitz' theorem, completing the proof.
\qed
\end{proof}

\begin{rem}\label{rem:3.4}
Condition \textup{(\ref{eq:3.14})} is clearly restrictive. We can relax it as follows. Assume that the coefficients alternate starting from some $m_0>0$. i.e. is replaced by
\begin{equation}\label{eq:3.24}
\textup{\sign}\left(b_{m}b_{m+1}\right)=-1,\ m>m_0.
\end{equation}
Then, it is not hard to see that 
\begin{equation}\label{eq:3.25}
\begin{array}{lcl}
\mathcal{F}_q^\nu(f;z)&=& F_q^\nu(f;z)-\displaystyle\sum_{k=-m_0+1}^{m_0-1}\frac{z_kF_q^\nu(f;z_k)}{z\beta_k}\left(q^{\nu^2}z^{-\nu}J_\nu(z; q^2)-cz^{\nu}J_{-\nu}(q^{-\nu}z; q^2)\right)\frac{1}{z-z_k}\\
&=&\displaystyle\sum_{|k|\geq m_0}\frac{z_kF_q^\nu(f;z_k)}{z\beta_k}\left(q^{\nu^2}z^{-\nu}J_\nu(z; q^2)-cz^{\nu}J_{-\nu}(q^{-\nu}z; q^2)\right)\frac{1}{z-z_k}
\end{array}
\end{equation}
satisfies
\begin{equation}\label{eq:3.26}
\mathcal{F}_q^\nu(f;z_k)=F_q^\nu(f;z_k), \quad|k|\geq m_0.
\end{equation}
Therefore, and following the same steps of the previous proof, we conclude that $\mathcal{F}_q^\nu(f;z)$ has only real simple simple zeros, which are distributed in the intervals
\begin{equation}\label{eq:3.27}
\left(z_k,z_{k+1}\right), \quad|k|\geq m_0,
\end{equation}
one in each interval.
\end{rem}


\section{Examples and Comparisons}

\begin{exa}
Consider the $q$-Hankel transform
\begin{equation}\label{eq:4.1}
 H_q^1(t^{3/2};z):=\int\limits_{0}^{1}t^{3/2}\left(tz\right)^{1/2}J_1\left(tz;q^2\right)\,d_qt=\left(1-q\right)z^{-1/2}J_2\left(z;q^2\right)= z^{1/2}V_{1, t^2}(z)\end{equation}
One can check that $t^{3/2}=f(t) \in L^2_q(0,1)$. Moreover, according to \textup{\cite{AnM06}},
\begin{equation}
\nonumber b_{1,t^2}:=1,\  B_{1,t^2}:=\frac{1}{(1-q^2)(1-q^6)}.
\end{equation}

Therefore, and noting that $g(t)=t^2 \in L^1_q(0,1)$, the zeros of $H_q^1(t^{3/2};z)$ are real, infinite and simple and they are distributed in the intervals,
\begin{equation}\label{eq:4.2}
\left(\mu_k, \mu_{k+1}\right)=\left(q^{-k+1/2}\sqrt{(1-q^2)(1-q^6)}, q^{-k-1/2}\sqrt{(1-q^2)(1-q^6)}\right),\ k=1,2,\cdots,
\end{equation}
one in each interval, provided that 
\begin{equation}\label{eq:4.3}
q^{-1}(1-q)\frac{b_{1,t^2}}{B_{1,t^2}}>1.
\end{equation}
Condition \textup{(\ref{eq:4.3})} restricts $q$ to lie in the interval $\left(0, 0.35118\right)$. Applying \textup{Theorem \ref{thm:3.2}}, we obtain the distributions of the zeros of \textup{(\ref{eq:4.1})} without the restriction \textup{(\ref{eq:4.3})}. Indeed, we first show that the $q$-Fourier coefficients of $f(t)=t^{3/2}$ with respect to the system $\left\{\left(qj_{k1}t\right)^{1/2}J_1\left(qj_{k1}t;q^2\right) \right\}_{k=1}^\infty$
 alternate. Due to \textup{(\ref{eq:3.4})} and \textup{(\ref{eq:3.5})},
\begin{equation}\label{eq:4.4}
f(t)=t^{3/2}=\sum\limits_{k=1}^{\infty}\left(\frac{\left(1-q\right)\left(qj_{k1}\right)^{-1/2}J_2\left(qj_{k1};q^2\right)}{\|\left(qj_{k1}t\right)^{1/2}J_1\left(qj_{k1}t;q^2\right)\|^2}\right)\left(qj_{k1}t\right)^{1/2}J_1\left(qj_{k1}t;q^2\right).
\end{equation}
Since $\sign\left(\mu_k\right)$ alternates and according to \textup{(\ref{eq:1.10})} the coefficients of the $q$-Fourier expansion \textup{(\ref{eq:4.4})} alternate as $\sign \left(J_2\left(qj_{k1};q^2\right)\right)=(-1)^{k+1}$, $k=1,2,\cdots.$ 

Hence for all $q \in (0,1)$, the positive zeros of \textup{(\ref{eq:4.1})} are infinite, real and simple and they lie in the intervals
\begin{equation}\label{eq:4.5}
\left(qj_{k1},qj_{(k+1)1}\right),\ k=1,2,\cdots,
\end{equation}
one in each interval.

\begin{table}\label{tab:1}
\centering	
\begin{tabular}{|c|c|c|c|c|c|}
\hline $q$&
$k$ & Approx $r_k$ &  $(\mu_{k},\mu_{k+1})$&  $(L_{k},L_{k+1})$ & $(\mu_{k},\mu_{k+1})\cap(L_{k},L_{k+1})$\\\hline
\multirow{3}{.5cm}{0.1}
& 1	& 9.99999 & (3.14642, 31.4642)  & (0.999949,10) & (3.14642, 10)      \\\cline{2-6}
& 2	& 99.99999 & (31.4642, 314.642)  & (10,100)  & (31.4642, 100)    \\\hline
\multirow{3}{.5cm}{0.2}
& 1 & 4.99983 & (2.19082, 10.9541)  & (0.999166, 4.99999) & (2.19082, 4.99999)     \\\cline{2-6}
& 2	& 24.99999 & (10.9541, 54.7705)  & (4,99999, 25) &  (10.9541,25)   	\\\hline
\multirow{3}{.5cm}{0.3}
& 1 & 3.332 & (1.74101, 5.80337)  & (0.995543, 3.3333) & (1.74101, 3.3333)          \\\cline{2-6}
& 2 & 11.111111 & (5.80337, 19.3446)  & (3.3333, 11.1111111) & (5.80337, 11.1111111)      	\\\hline
\end{tabular}
\caption{Separation of the zeros $r_k$ of $H_q^1(t^{3/2};z)$ in the asymptotic intervals for $q \in \left(0, 0.35118\right)$.}
\end{table}

\begin{figure*}\label{fig:1}
\centering
\begin{subfigure}[t]{0.5\textwidth}
\centering
\includegraphics[width=4cm, height=3cm]{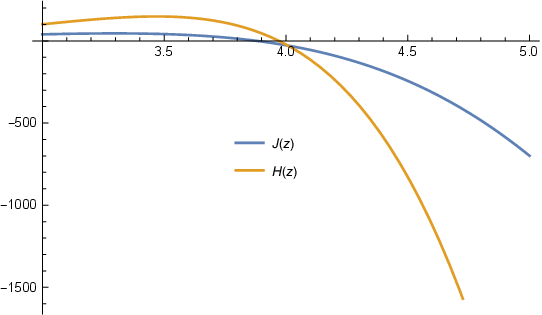}
\caption{q = 0.5}
\end{subfigure}%
~ 
\begin{subfigure}[t]{0.5\textwidth}
\centering
\includegraphics[width=4cm, height=3cm]{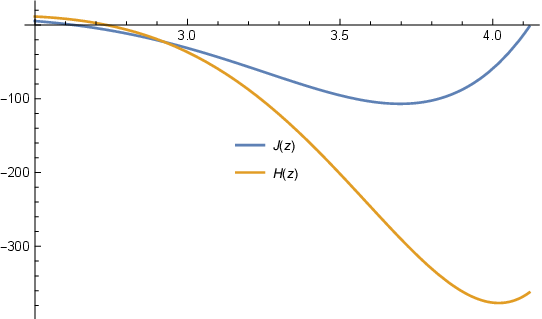}
\caption{q = 0.6}
\end{subfigure}
\begin{subfigure}[t]{0.5\textwidth}
\centering
\includegraphics[width=4cm, height=3cm]{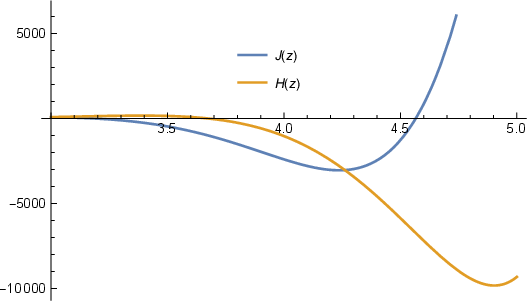}
\caption{q = 0.7}
\end{subfigure}%
\caption{Illustration of the first zero of $H_q^1(t^{3/2};z)$ between the zeros of the function $J_1(q^{-1}z;q^2)$ at different values of $q$.}
\end{figure*}

\begin{table}\label{tab:2}
\centering	
\begin{tabular}{|c|c|c|c|c|}
\hline $q$&
$k$&	 $L_k$&	 Approx $r_k$&  $L_{k+1}$\\\hline
\multirow{3}{.5cm}{0.5}
& 1	& 3.88041 & 3.97127 & 7.65813\\\cline{2-5}
& 2 & 7.65813 & 7.91476 & 15.3279 \\\cline{2-5}
& 3 & 15.3279 & 15.8325 & 30.6595 \\\cline{2-5}
& 4	& 30.6595 & 31.6660 & 61.3209	\\\hline
\multirow{3}{.5cm}{0.6}
& 1 & 2.62559 & 2.73357  & 4.12378\\\cline{2-5}
& 2 & 4.12378 & 4.44658  & 6.92490 \\\cline{2-5}
& 3 & 6.92490 & 7.43061  & 11.5538 \\\cline{2-5}
& 4 & 11.5538 & 12.3890  & 19.2640 \\\hline
\multirow{3}{.5cm}{0.7}
& 1 & 3.18435 & 3.67383  & 4.56578\\\cline{2-5}
& 2 & 4.56578 & 5.25751  & 6.54784 \\\cline{2-5}
& 3 & 6.54784 & 7.52460  & 9.37056\\\cline{2-5}
& 4 & 9.37056 & 10.7585  & 13.3978	\\\hline
\end{tabular}
\caption{Illustration of approximating $r_k$ of $H_q^1(t^{3/2};z)$ in the intervals of approximating zeros of $J_1(q^{-1}z;q^2)$}
\end{table}
Combining \textup{Theorem \ref{thm:3.2}} with the results of \textup{\cite{AnM06}}, we can separate the zeros of $H_q^1(t^{3/2};z)$ as depicted in \textup{Table 1} for $q \in \left(0, 0.35118\right)$.

In \textup{Figure 1}, we demonstrate the first root of $H_q^1(t^{3/2};z)$. We approximate $H_q^1(t^{3/2};z)$ by taking the first $25$ terms of the $q$-integrations and $J_1(q^{-1}z;q^2)$. Notice that the zero of $H_q^1(t^{3/2};z)$ lie in the prescribed interval for various values of $q$. In \textup{Table 2}, we exhibit some numerical results illustrating \textup{Theorem \ref{thm:3.2}}. We can see that all the approximated zeros lie in the asymptotic intervals.

\end{exa}

\begin{exa}
Consider the $q$-transform
\begin{eqnarray}\label{eq:4.6}
F_q^{1/4}(t^{1/4};z)&=&-\frac{q^{-5/16}}{1+q}\int_0^1 t^{3/4}\left(J_{1/4}(tz; q^2)Y_{1/4}(z; q^2)-Y_{1/4}(tz; q^2)J_{1/4}(z; q^2)\right) \;d_qt\\
\nonumber&=& q^{-17/16}\left(1-q\right)^2\frac{\left(q^2;q^2\right)_\infty^2}{\left(q^{1/2};q^2\right)_\infty\left(q^{3/2};q^2\right)_\infty}\left(
\frac{q}{z}J_{1/4}(z;q^2)J_{3/4}(q^{-1/4}z;q^2)\right.\\
\nonumber &{}&\left.+\frac{1}{z}J_{-1/4}(q^{-1/4}z;q^2)J_{-3/4}(q^{-1}z;q^2)-\frac{q^{3/4}}{z^{7/4}}\frac{\left(q^{1/2};q^2\right)_\infty}{\left(q^2;q^2\right)_\infty} J_{-1/4}(q^{-1/4}z;q^2)\right)
 \end{eqnarray}
One can check that $t^{1/4}=f(t) \in L^2_q(0,1)$.

Now, we apply \textup{Theorem \ref{thm:3.3}} on \textup{(\ref{eq:4.6})} to derive the distributions of the it's zeros. Indeed, we first show that the $q$-Fourier coefficients of $f(t)=t^{1/4}$ with respect to the system $\left\{J_{1/4}(z_k;q^2) t^{1/2}\left(c^{-1}z_k^{-1/2}J_{1/4}(z_kt;q^2)-J_{-1/4}(z_kq^{-1/4}t;q^2)\right)
 \right\}_{k=1}^\infty$ alternate. Due to \textup{(\ref{eq:3.13})} and \textup{(\ref{eq:3.16})},
\begin{equation}\label{eq:4.7}
f(t)=t^{1/4}=\sum\limits_{k=1}^{\infty}QF_q^{1/4}\left(t^{1/4};z_k\right)\left\{J_{1/4}(z_k;q^2) t^{1/2}\left(c^{-1}z_k^{-1/2}J_{1/4}(z_kt;q^2)-J_{-1/4}(z_kq^{-1/4}t;q^2)\right)\right\},
\end{equation}
where 
\begin{equation}
\nonumber Q=\frac{-\left\{\frac{q^{-5/16}}{1+q}\Gamma_{q^2}(1/4)\Gamma_{q^2}(3/4)\right\}^{-1}}{\left\|J_{1/4}(z_k;q^2) t^{1/2}\left(c^{-1}z_k^{-1/2}J_{1/4}(z_kt;q^2)-J_{-1/4}(z_kq^{-1/4}t;q^2)
\right) \right\|^2}<0.
\end{equation}
We can see that expansion \textup{(\ref{eq:4.7})} is the $q$-Bessel expansion with respect to the system $\left\{J_{1/4}(z_k;q^2) t^{1/2}\left(c^{-1}z_k^{-1/2}J_{1/4}(z_kt;q^2)-J_{-1/4}(z_kq^{-1/4}t;q^2)\right)
 \right\}_{k=1}^\infty$. Therefore the coefficients of the $q$-Bessel expansion of \textup{(\ref{eq:4.7})} alternate starting from $m_0=3$. 
 
Hence for all $q \in (0,1)$, the zeros of
\begin{equation}\label{eq:4.8}
\begin{array}{lcl}
\mathcal{F}_q^{1/4}(t^{1/4};z)&=& F_q^{1/4}(t^{1/4};z)\\
&{}&-\displaystyle\sum_{k=-2,k\ne 0}^{2}\frac{z_kF_q^{1/4}(t^{1/4};z_k)}{z\beta_k}\left(q^{1/16}z^{-1/4}J_{1/4}(z; q^2)-cz^{1/4}J_{-1/4}(q^{-1/4}z; q^2)\right)\frac{1}{z-z_k}\\
&=&\displaystyle\sum_{|k|\geq 3}^{}\frac{z_kF_q^{1/4}(t^{1/4};z_k)}{z\beta_k}\left(q^{1/16}z^{-1/4}J_{1/4}(z; q^2)-cz^{1/4}J_{-1/4}(q^{-1/4}z; q^2)\right)\frac{1}{z-z_k}
\end{array}
\end{equation}
are infinite, real and simple and they lie in the intervals
\begin{equation}\label{eq:4.9}
\left(z_k,z_{k+1}\right), \quad|k|\geq 3,
\end{equation}
one in each interval.

\begin{figure*}\label{fig:2}
\centering
\begin{subfigure}[t]{0.5\textwidth}
\centering
\includegraphics[width=4cm, height=3cm]{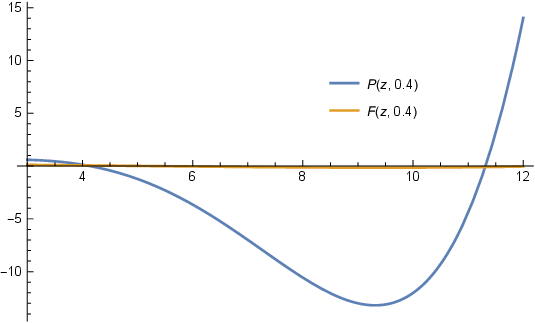}
\caption{q = 0.4}
\end{subfigure}%
~ 
\begin{subfigure}[t]{0.5\textwidth}
\centering
\includegraphics[width=4cm, height=3cm]{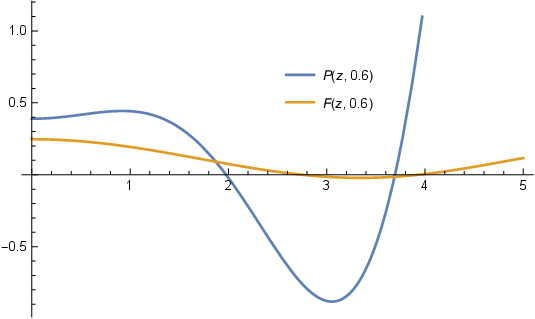}
\caption{q = 0.6}
\end{subfigure}
\begin{subfigure}[t]{0.5\textwidth}
\centering
\includegraphics[width=4cm, height=3cm]{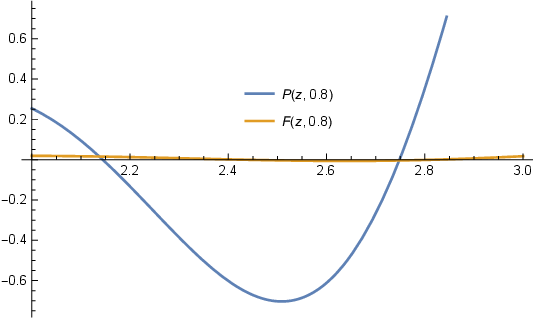}
\caption{q = 0.8}
\end{subfigure}%
\caption{Illustration of the first zero of $F_q^{1/4}(t^{1/4};z)$ between the zeros of the function $q^{1/16}z^{-1/4}J_{1/4}(z; q^2)-z^{1/4}J_{-1/4}(q^{-1/4}z; q^2)$ at different values of $q$.}
\end{figure*}

\begin{table}\label{tab:3}
\centering	
\begin{tabular}{|c|c|c|c|c|}
\hline $q$&
$k$&	 $z_k$&	 Approx $r_k$&  $z_{k+1}$\\\hline
\multirow{3}{.5cm}{0.4}
& 1	& 4.14149 & 5.30337 & 11.3151\\\cline{2-5}
& 2 & 11.3151 & 12.3989 & 29.4680 \\\cline{2-5}
& 3 & 29.4680 & 31.0655 & 75.2827 \\\cline{2-5}
& 4	& 75.2827 & 77.6621 & 190.529	\\\hline
\multirow{3}{.5cm}{0.6}
& 1 & 1.97622 & 2.74773  & 3.69412\\\cline{2-5}
& 2 & 3.69412 & 3.93493  & 6.34216 \\\cline{2-5}
& 3 & 6.34216 & 6.80139  & 10.7773 \\\cline{2-5}
& 4 & 10.7773 & 11.3180  & 18.2025 \\\hline
\multirow{3}{.5cm}{0.8}
& 1 & 0.813412 &         \texttwelveudash & 1.52726\\\cline{2-5}
& 2 & 1.52726  &   \texttwelveudash       & 2.14433 \\\cline{2-5}
& 3 & 2.14433  & 2.42880  & 2.74849\\\cline{2-5}
& 4 & 2.74849  & 2.82145  & 3.46185\\\cline{2-5}	
& 5 & 3.46185  & 3.61722  & 4.35223\\\cline{2-5}
& 6 & 4.35223  & 4.50865  & 5.46686	\\\hline
\end{tabular}
\caption{Illustration of approximating $r_k$ of $F_q^{1/4}(t^{1/4};z)$ in the intervals of approximating zeros of $q^{1/16}z^{-1/4}J_{1/4}(z; q^2)-z^{1/4}J_{-1/4}(q^{-1/4}z; q^2)$.}
\end{table}

In \textup{Figure 2}, we demonstrate the first root of $F_q^{1/4}(t^{1/4};z)$. We approximate $F_q^{1/4}(t^{1/4};z)$ by taking the first $50$ terms of the $q$-integrations and $q^{1/16}z^{-1/4}J_{1/4}(z; q^2)-z^{1/4}J_{-1/4}(q^{-1/4}z; q^2)$. Notice that the zero of $F_q^{1/4}(t^{1/4};z)$ lie in the prescribed interval for various values of $q$. In \textup{Table 3}, we exhibit some numerical results illustrating \textup{Theorem \ref{thm:3.3}}. We can see that all the approximated zeros lie in the asymptotic intervals.
\end{exa}

\noindent{\bf Statements and Declarations}

\noindent{\bf Funding} Both authors declare that there are no financial or non-financial funds for this paper.

\noindent{\bf Data availability}  No data was gathered in this paper.


\bibliographystyle{abbrv}
{\bibliography{shymaaa_refs_01}}

\end{document}